\newtheorem{theorem}{Theorem}[section]
\newtheorem{proposition}[theorem]{Proposition}
\theoremstyle{definition}
\theoremstyle{remark}
\numberwithin{equation}{section}
\DeclareMathOperator{\Cent}{Cent}
\newcommand{\Z}{\mathbb{Z}}
\begin{document}
\setcounter{page}{1}
\title[Commuting probabilities of $n$-centralizer finite rings]{Commuting probabilities of $n$-centralizer finite rings}

\author[J. Dutta, D. K. Basnet,  R. K. Nath]{Jutirekha Dutta, Dhiren Kumar Basnet and Rajat Kanti Nath
}

\address{ Department of Mathematical Sciences, Tezpur
University,  Napaam-784028, Sonitpur, Assam, India.}
\email{jutirekhadutta@yahoo.com; dbasnet@tezu.ernet.in and
\newline rajatkantinath@yahoo.com}



\keywords{ finite ring, commuting probability, $n$-centralizer rings.}  
 
\subjclass[2010]{16U70, 16U80.} 



\begin{abstract} 
Let $R$ be a finite ring. The commuting probability of  $R$, denoted by $\Pr(R)$, is the probability that any two randomly chosen elements of $R$ commute.  $R$ is called an $n$-centralizer ring if it has $n$ distinct  centralizers. In this paper, we compute $\Pr(R)$ for some $n$-centralizer finite rings.  
\end{abstract}
\maketitle

\section{Introduction}
 The commuting probability of a finite ring $R$ is the probability that a randomly chosen pair of elements of $R$ commute. We write $\Pr(R)$ to denote this probability. 
It is not difficult to see that   
\begin{equation}\label{comformula}
\Pr(R)  = \frac{|Z(R)|}{|R|} + \frac{1}{|R|^2}\underset{r \in R \setminus Z(R)}{\sum}|C_R(r)|
\end{equation}
where $C_R(r)$ and $Z(R)$ are known as centralizer of $r \in R$ and center of $R$ given by  $\{s \in R : rs = sr\}$ and $Z(R) = \underset{r \in R}{\cap}C_R(r)$ respectively. 

In 1976, MacHale \cite{dmachale} initiated the study of $\Pr(R)$. However, it gets popularity in the recent years only unlike the commuting probability of a finite group.  The study of commuting probability of a finite group was originated from the works of Erd$\ddot{\rm o}$s and Tur$\acute{\rm a}$n \cite{pEpT68} published in 1968. Many mathematicians have considered the notion of commuting probability of a finite group in their works (see \cite{DN11} and the references therein). However, only few papers are available on $\Pr(R)$ in the literature (see \cite{BM, BMS,duttabasnetnath,dmachale} for example).     
Recently, Buckley et al.  \cite{BMS} have computed $\Pr(R)$ for several families of finite rings and characterize all finite rings having $\Pr(R) \geq \frac{11}{32}$. In this paper, we compute commuting probabilities of finite $n$-centralizer rings for $n \leq 7$. Recall that a ring $R$ is called $n$-centralizer if $|\Cent(R)| = n$, where $\Cent(R) = \{C_R(x) : x \in R\}$.  The class of $n$-centralizer finite rings was introduced and studied by Dutta et al. in \cite{dbn2014, dbn2015}.

Throughout this paper $R$ denotes a finite ring.   For any subring $S$ of $R$, we write $R/S$ or $\frac{R}{S}$  to denote the additive quotient group    $(S, +)$ in   $(R, +)$. The isomorphisms considered in this paper are the additive group isomorphisms. We shall also use the fact that for any non-commutative ring $R$, the additive group $\frac{R}{Z(R)}$ is not a cyclic group (see \cite[Lemma 1]{dmachale}).

\section{Rings with known central factor}
In this section, we deduce some properties of a finite non-commutative ring $R$ having some known central factor. These results will also serve as prerequisites for  the results obtained in the next section. We begin with the following result which is obtained by MacHale \cite[Theorem 1 and 2]{dmachale} in the year $1974$. 

\begin{theorem}\label{machale1-2}
Let $R$ be a finite ring. Then $\frac{R}{Z(R)} \cong {\Z}_2 \times {\Z}_2$ if and only if $\Pr(R) = \frac{5}{8}$. Further, if $p$ is the smallest prime divisor of $|R|$ then $\frac{R}{Z(R)} \cong {\Z}_p \times {\Z}_p$ if and only if $\Pr(R) = \frac{p^2 + p - 1}{p^3}$.
\end{theorem}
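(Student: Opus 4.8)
The plan is to treat the second biconditional in full generality, since the first statement is simply its specialization to $p=2$: indeed $\frac{2^2+2-1}{2^3}=\frac58$, and if $\frac{R}{Z(R)}\cong \Z_2\times\Z_2$ then $2\mid|R|$, so $2$ is automatically the smallest prime divisor. Throughout I will work with the additive group structure, writing $n=|R|$ and $z=|Z(R)|$, and I will freely use that each centralizer $C_R(r)$ is an additive subgroup of $(R,+)$ containing $Z(R)$, together with the counting formula \eqref{comformula}.

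For the forward direction, assume $\frac{R}{Z(R)}\cong \Z_p\times\Z_p$, so that $n=p^2z$. For any $r\in R\setminus Z(R)$ the image of $C_R(r)$ in $\frac{R}{Z(R)}$ is a subgroup that is nontrivial (it contains $\bar r\neq 0$) and proper (otherwise $r\in Z(R)$); since every proper nontrivial subgroup of $\Z_p\times\Z_p$ has order $p$, I obtain $|C_R(r)|=pz$ for every non-central $r$. Substituting $|C_R(r)|=pz$ and $|R\setminus Z(R)|=z(p^2-1)$ into \eqref{comformula} and simplifying yields $\Pr(R)=\frac{1}{p^2}+\frac{p^2-1}{p^3}=\frac{p^2+p-1}{p^3}$.

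For the converse I will run an extremal argument. First note that $\Pr(R)=\frac{p^2+p-1}{p^3}<1$ forces $R$ to be non-commutative, so by the quoted fact $\frac{R}{Z(R)}$ is non-cyclic; a non-cyclic finite abelian group contains $\Z_q\times\Z_q$ for some prime $q$ dividing its order, and since that order divides $|R|$ we have $q\geq p$, whence $[R:Z(R)]\geq q^2\geq p^2$, i.e. $t:=z/n\leq 1/p^2$. Next, for each $r\notin Z(R)$ the subgroup $C_R(r)$ is proper in $(R,+)$, so its index is a divisor of $n$ exceeding $1$, hence at least $p$; thus $|C_R(r)|\leq n/p$. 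Feeding both bounds into \eqref{comformula} gives
\[
\Pr(R)\;\leq\; t+\frac{(n-z)}{n^2}\cdot\frac{n}{p}\;=\;\frac1p+t\,\frac{p-1}{p}\;\leq\;\frac1p+\frac{1}{p^2}\cdot\frac{p-1}{p}\;=\;\frac{p^2+p-1}{p^3}.
\]
Since equality is assumed, both inequalities must be equalities; in particular $t=1/p^2$, so $\frac{R}{Z(R)}$ is a non-cyclic abelian group of order $p^2$ and is therefore isomorphic to $\Z_p\times\Z_p$.

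The computation in the forward direction is routine. The crux of the argument is the converse, and specifically the inequality $[R:Z(R)]\geq p^2$: this is where non-commutativity enters through the non-cyclicity of $\frac{R}{Z(R)}$, and where the hypothesis that $p$ is the smallest prime divisor of $|R|$ is essential, both to force the index up to $p^2$ and to bound each centralizer index below by $p$. Once the two bounds are in place, tracking the equality case is the only remaining point requiring care.
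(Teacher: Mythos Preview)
The paper does not actually prove Theorem~\ref{machale1-2}: it is quoted as a result of MacHale and attributed to \cite[Theorems~1 and~2]{dmachale}. (The forward implication for an arbitrary prime~$p$ is reproved later as part of Theorem~\ref{3chdc}, along the same lines as your forward direction, but the converse is nowhere argued in the paper.) So there is no in-paper proof to compare against; your argument is self-contained and the extremal proof of the converse is clean and correct.

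There is one small gap in your reduction of the first biconditional to the second. You note that $R/Z(R)\cong\Z_2\times\Z_2$ forces $2\mid|R|$, which covers the forward direction; but for the converse you must also show that $\Pr(R)=\frac58$ forces $2\mid|R|$ before you are entitled to take $p=2$ in the second statement. This follows at once from your own inequality: if $p$ is the smallest prime dividing $|R|$, then $\Pr(R)\le\frac{p^2+p-1}{p^3}$, and since $p\mapsto\frac{p^2+p-1}{p^3}$ is strictly decreasing on $[2,\infty)$, any $p\ge 3$ would yield $\Pr(R)\le\frac{11}{27}<\frac58$. With that one sentence added, the proof is complete.
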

If $p$ is any  prime divisor of $|R|$, not necessarily the smallest one, then we have the following result.
\begin{theorem}\label{3chdc}
Let $R$ be a ring and $\frac{R}{Z(R)} \cong {\Z}_p \times {\Z}_p$, where $p$ is a prime. Then $R$ is a $(p + 2)$-centralizer ring and $\Pr(R) = \frac{p^2 + p - 1}{p^3}$.
\end{theorem}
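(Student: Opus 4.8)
The plan is to exploit the correspondence between additive subgroups of $R$ containing $Z(R)$ and subgroups of the additive group $\frac{R}{Z(R)} \cong \Z_p \times \Z_p$. First I would record that $|R| = p^2|Z(R)|$, so that $|R \setminus Z(R)| = (p^2 - 1)|Z(R)|$. Since $R$ is non-commutative (its central factor is non-cyclic), for each $r \in R \setminus Z(R)$ the centralizer $C_R(r)$ is an additive subgroup satisfying $Z(R) \subsetneq C_R(r) \subsetneq R$: the left inclusion is strict because $r \in C_R(r) \setminus Z(R)$, and the right inclusion is strict because $C_R(r) = R$ would force $r \in Z(R)$.

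The key step is to pin down these proper centralizers exactly. Passing to $\frac{R}{Z(R)} \cong \Z_p \times \Z_p$, the image $\frac{C_R(r)}{Z(R)}$ is a proper non-trivial subgroup, hence of order $p$; as it contains the non-zero element $\bar{r} = r + Z(R)$, it must coincide with the cyclic subgroup $\langle \bar{r}\rangle$. Consequently $|C_R(r)| = p|Z(R)|$ for every $r \notin Z(R)$, and two such elements share a centralizer precisely when they generate the same order-$p$ subgroup of $\Z_p \times \Z_p$. Since $\Z_p \times \Z_p$ has exactly $p+1$ subgroups of order $p$, and each arises as $\langle \bar{r}\rangle$ for some $\bar{r} \neq 0$, there are exactly $p+1$ distinct proper centralizers. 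Adjoining $R = C_R(z)$ for $z \in Z(R)$ yields $|\Cent(R)| = p+2$, so $R$ is $(p+2)$-centralizer.

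Finally I would substitute into formula~\eqref{comformula}. Using $|C_R(r)| = p|Z(R)|$ on the whole of $R \setminus Z(R)$ gives $\sum_{r \in R\setminus Z(R)}|C_R(r)| = (p^2 - 1)|Z(R)| \cdot p|Z(R)| = p(p^2-1)|Z(R)|^2$, while $|R|^2 = p^4|Z(R)|^2$. Hence $\Pr(R) = \frac{1}{p^2} + \frac{p(p^2-1)}{p^4} = \frac{p + p^2 - 1}{p^3}$, the two summands combining over the common denominator $p^3$ to give the claimed value $\frac{p^2+p-1}{p^3}$.

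I expect the main obstacle to be the counting step, namely proving that the proper centralizers are in bijection with the order-$p$ subgroups of $\Z_p \times \Z_p$. The crux is the identity $\frac{C_R(r)}{Z(R)} = \langle \bar{r}\rangle$, which rests on an order-$p$ subgroup of $\Z_p \times \Z_p$ being forced once a single non-zero element is prescribed; everything else, namely the uniform size $|C_R(r)| = p|Z(R)|$ and the final arithmetic, then follows routinely.
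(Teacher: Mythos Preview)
Your argument is correct and follows essentially the same route as the paper: both proofs identify the proper centralizers with the $p+1$ order-$p$ subgroups of $R/Z(R)\cong\Z_p\times\Z_p$, deduce $|C_R(r)|=p|Z(R)|$ for every non-central $r$, and then feed this into formula~\eqref{comformula}. The only difference is presentational---the paper writes out explicit generators $a,b$ and lists the centralizers $C_R(a), C_R(b), C_R(a+mb)$ coset by coset, whereas you phrase the same bijection abstractly via the subgroup lattice of $\Z_p\times\Z_p$.
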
 

\begin{proof}
We write $Z := Z(R)$. Since  $R/Z \cong {\Z}_p \times {\Z}_p$ we have 
\[ 
\frac{R}{Z} = \langle Z + a, Z + b \;:\; p(Z + a) = p(Z + b) = Z; a, b \in R \rangle, 
\] where $ab \neq ba$. 
If $S/Z$ is additive non-trivial subgroup of $R/Z$ then $|S/Z| = p$. Therefore, any additive proper subgroup of $R$ properly containing $Z$ has $p$ \;disjoint right cosets. Hence, some of the proper additive subgroups of $R$ properly containing $Z$ are
\begin{align*}
S_m &:= C_R(a + mb)\\
&=Z \cup (Z + (a + mb)) \cup (Z + 2(a + mb)) \cup \dots \cup (Z + (p - 1)(a + mb)),\\
       &\text{ where } 1 \leq m \leq (p - 1),\\
S_p &:= C_R(a) = Z \cup (Z + a) \cup (Z + 2a) \cup \dots \cup (Z + (p - 1)a) \text{ and} \\
S_{p + 1} &:= C_R(b) = Z \cup (Z + b) \cup (Z + 2b) \cup \dots \cup (Z + (p - 1)b).      
\end{align*}
Now for any $x \in R \setminus Z$,  we have $Z + x$ is equal to  $Z + k$ for some $k \in \{ma, mb, ma + mb: 1 \leq m \leq (p - 1) \}$. Therefore $C_R(x) = C_R(k)$. Again, let $y \in S_j - Z$ for some $j \in \{1, 2, \dots, (p + 1)\}$, then $C_R(y) \neq S_q$, where $1 \leq q \, (\neq j) \leq (p + 1)$. Thus $C_R(y) = S_j$. Hence $|\Cent(R)| = p + 2$. 

Therefore, by \eqref{comformula} we have 
\[
\Pr(R)  = \frac{|Z|}{p^2|Z|} + \frac{p(p - 1)^2|Z|^2 + 2p(p - 1)|Z|^2}{p^4|Z|^2} = \frac{p^2 + p - 1}{p^3}.
\]    
This completes the proof.
\end{proof}

In \cite{dbn2014}, we have proved the following two results regarding $4$-centralizer and $5$-centralizer finite rings.
\begin{theorem}\label{4,5-cent}
Let $R$ be a finite ring. Then 
\begin{enumerate}
\item[{\rm(a)}] $\frac{R}{Z(R)} \cong {\mathbb{Z}}_2 \times {\mathbb{Z}}_{2}$ if and only if $R$ is $4$-centralizer {\rm(see \cite[Theorem 3.2]{dbn2014})}.
\item[{\rm(b)}]  $\frac{R}{Z(R)} \cong {\mathbb{Z}}_3 \times {\mathbb{Z}}_{3}$ if and only if $R$ is $5$-centralizer {\rm(see \cite[Theorem 4.3]{dbn2014})}.
\end{enumerate}
\end{theorem}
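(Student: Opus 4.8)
The plan is to treat the two ``central quotient $\Rightarrow n$-centralizer'' implications as immediate consequences of Theorem \ref{3chdc}: taking $p=2$ shows that $\frac{R}{Z(R)}\cong\Z_2\times\Z_2$ forces $R$ to be $4$-centralizer, and taking $p=3$ shows that $\frac{R}{Z(R)}\cong\Z_3\times\Z_3$ forces $R$ to be $5$-centralizer. So the substance of the theorem is the two converses, and for these I would pass to the additive quotient and translate the hypothesis on $|\Cent(R)|$ into a statement about coverings of $\frac{R}{Z(R)}$ by proper subgroups.

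Write $Z:=Z(R)$ and $G:=\frac{R}{Z}$. Since a commutative ring is $1$-centralizer, an $n$-centralizer ring with $n\in\{4,5\}$ is non-commutative, so $G$ is a non-trivial finite abelian group which, by the quoted lemma of MacHale, is non-cyclic. If $\Cent(R)=\{C_1,\dots,C_{n-1},R\}$ with the $C_i$ the proper centralizers, then each $\overline{C_i}:=C_i/Z$ is a proper subgroup of $G$; every non-central $x$ lies in $C_R(x)\in\{C_1,\dots,C_{n-1}\}$, so the $\overline{C_i}$ cover $G$; and since $Z=\bigcap_{x\in R}C_R(x)=\bigcap_i C_i$ (intersecting with $R$ is vacuous), the total intersection $\bigcap_i\overline{C_i}$ is trivial. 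Thus $G$ is covered by exactly $n-1$ proper subgroups whose intersection is $\{0\}$.

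For part (a) this gives a cover of $G$ by three proper subgroups. Because no group is the union of two of its proper subgroups, the cover is irredundant, so Scorza's theorem applies: the quotient of $G$ by $\overline{C_1}\cap\overline{C_2}\cap\overline{C_3}$ is isomorphic to $\Z_2\times\Z_2$. As this intersection is trivial, $G\cong\Z_2\times\Z_2$, giving the desired converse. I would stress here that the triviality of the intersection is exactly what rules out the larger groups (such as $\Z_2\times\Z_4$) that are also unions of three proper subgroups, but only with a non-trivial common core.

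For part (b) the same reduction yields an irredundant cover of $G$ by four proper subgroups with trivial intersection, and \textbf{this is where the main difficulty lies.} First I would confirm irredundancy: if one $\overline{C_i}$ were superfluous then $G$ would be a union of three proper subgroups with trivial intersection, hence $G\cong\Z_2\times\Z_2$ by the argument of part (a), and then Theorem \ref{3chdc} with $p=2$ would make $R$ a $4$-centralizer ring, contradicting the hypothesis. Having an honest irredundant fourfold cover with trivial intersection, I would invoke the classification of minimal subgroup coverings of abelian groups (the Scorza--Tomkinson circle of results): the only abelian group whose \emph{minimal} cover has size $4$ with trivial associated core is $\Z_3\times\Z_3$. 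The non-abelian group $S_3$, which occurs in the analogous $5$-centralizer classification for finite \emph{groups}, does not intervene here precisely because $G=\frac{R}{Z}$ is abelian. The delicate point to nail down is that our fourfold cover is genuinely of \emph{minimum} size, i.e.\ that $G$ admits no threefold cover; I expect to control this as above, using the trivial-intersection property together with Theorem \ref{3chdc} to exclude a $\Z_2\times\Z_2$ quotient of $G$. Alternatively, one can exploit that the commutator map $(x,y)\mapsto xy-yx$ descends to a non-degenerate alternating form on $G$, whose proper centralizer subgroups are the radicals $\overline{x}^{\perp}$; this forces the relevant elementary abelian section to have even rank and pins the ``four radicals'' case to $\Z_3\times\Z_3$.
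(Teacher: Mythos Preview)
The paper does not supply a proof of Theorem~\ref{4,5-cent}; it is simply quoted from \cite{dbn2014} with no argument given in the present paper. So there is no in-paper proof to compare against, and I evaluate your proposal on its own merits.

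The two forward implications via Theorem~\ref{3chdc} are correct, and your Scorza argument for the converse in (a) is sound: the three proper centralizer images $\overline{C_1},\overline{C_2},\overline{C_3}$ cover $G=R/Z(R)$ and their total intersection is $Z/Z=\{0\}$, so Scorza yields $G\cong\Z_2\times\Z_2$ directly. The converse in (b), however, has a genuine gap. In the irredundancy step you assert that if one $\overline{C_i}$ is superfluous then the remaining three cover $G$ \emph{with trivial intersection}; but discarding a term from the four-fold intersection $\bigcap_{i=1}^{4}\overline{C_i}=\{0\}$ can only \emph{enlarge} the intersection, so triviality of the triple intersection does not follow. Without it, Scorza only gives a $\Z_2\times\Z_2$ \emph{quotient} of $G$, and Theorem~\ref{3chdc} (which requires $R/Z(R)\cong\Z_p\times\Z_p$, not merely a quotient thereof) cannot be invoked to derive a contradiction with $|\Cent(R)|=5$. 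You rightly flag minimality of the four-cover as ``the delicate point'', but the proposed fix via Theorem~\ref{3chdc} loops back to exactly this issue. Your alternating-form alternative is likewise not ready to use: the commutator $(x,y)\mapsto xy-yx$ descends to a biadditive alternating map $G\times G\to (R,+)$ with values in an abelian group rather than a field, so the standard ``non-degenerate alternating form forces even rank'' conclusion is not available without first reducing to an elementary abelian $p$-section and showing the form factors through a single copy of $\Z_p$.
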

The following result gives an information regarding $6$-centralizer finite rings.
\begin{theorem}\label{z2*z10}
Let $R$ be a finite   ring such that $\frac{R}{Z(R)} \cong {\mathbb{Z}}_2 \times {\mathbb{Z}}_{10}$. Then $R$ is $6$-centralizer. 
\end{theorem}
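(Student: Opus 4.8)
The plan is to set up the usual correspondence between $\Cent(R)$ and the additive subgroup lattice of $R/Z$, exactly as in the proof of Theorem~\ref{3chdc}. Writing $Z := Z(R)$, each centralizer $C_R(x)$ is a subring containing $Z$, depends only on the coset $Z + x$, and satisfies $C_R(x)/Z = \{Z + y : xy = yx\}$, an additive subgroup of $R/Z$ containing $Z + x$. Counting $|\Cent(R)|$ thus reduces to counting the distinct subgroups of $R/Z \cong \mathbb{Z}_2 \times \mathbb{Z}_{10} \cong \mathbb{Z}_2 \times \mathbb{Z}_2 \times \mathbb{Z}_5$ that arise as such centralizer subgroups, together with $R = C_R(z)$ for $z \in Z$. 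So my first move is to analyse this subgroup lattice and decide which of its subgroups can actually occur as a centralizer.

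The engine for the analysis is the additive commutator $[x,y] := xy - yx$. It is biadditive and alternating, and since $[z,y] = 0$ for $z \in Z$ it is well defined on $R/Z$. If $Z + x$ has additive order $m$, then $mx \in Z$, so $m[x,y] = [mx,y] = 0$; hence the additive order of $[x,y]$ divides the order of $Z + x$, and by symmetry the order of $Z + y$ as well. From this I would record two consequences: (a) if $Z + x$ and $Z + y$ have coprime orders then the order of $[x,y]$ divides $1$, so $x$ and $y$ commute; and (b) if $Z + y$ lies in the cyclic group $\langle Z + x\rangle$, say $y - kx \in Z$, then $[x,y] = k[x,x] + [x, y - kx] = 0$.

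Now I would look at the Sylow $5$-subgroup of $R/Z$, which is the cyclic group $\mathbb{Z}_5$. Choose $x$ with $Z + x$ of order $5$ and take an arbitrary $y \in R$. Decomposing $Z + y = (Z + u) + (Z + v)$ with $Z + u$ in the Sylow $2$-subgroup and $Z + v$ in the Sylow $5$-subgroup, and choosing lifts $u, v \in R$, I get $[x,y] = [x,u] + [x,v]$. Here $[x,u] = 0$ by (a), since the orders of $Z + x$ and $Z + u$ are $5$ and a power of $2$; and $[x,v] = 0$ by (b), since $Z + v$ lies in the cyclic Sylow $5$-subgroup $\langle Z + x\rangle$. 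Thus $x$ commutes with every element of $R$, forcing $x \in Z$ and $Z + x = Z$ --- contradicting that $Z + x$ has order $5$.

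The outcome I anticipate is therefore that the hypothesis cannot be met: no finite ring has $R/Z(R) \cong \mathbb{Z}_2 \times \mathbb{Z}_{10}$, so Theorem~\ref{z2*z10} holds vacuously. This is a sharpening of the quoted MacHale lemma that $R/Z(R)$ is non-cyclic; the same commutator estimate shows that, for non-commutative $R$, no Sylow subgroup of $R/Z(R)$ can be cyclic, whence $|R/Z(R)|$ cannot equal $20$. The main obstacle is precisely spotting this unsatisfiability: the instinctive route of mimicking the centralizer count of Theorem~\ref{3chdc} does not terminate cleanly, because the subgroup lattice of $\mathbb{Z}_2 \times \mathbb{Z}_{10}$ supports no non-degenerate alternating commutator form, and the obstruction only becomes visible once the order-of-commutator estimate above is in hand.
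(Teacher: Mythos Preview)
Your argument is correct and in fact establishes more than the paper claims: you show that the hypothesis $R/Z(R)\cong\mathbb{Z}_2\times\mathbb{Z}_{10}$ is impossible, so the theorem holds vacuously. The paper takes the opposite tack, assuming such an $R$ exists: it fixes generators $Z+a$, $Z+b$ of $R/Z$ of orders $2$ and $10$, writes down five explicit centralizers $C_R(a)$, $C_R(b)$, $C_R(a+b)$, $C_R(a+2b)$, $C_R(a+5b)$, and asserts that every non-central element has one of these as its centralizer, giving $|\Cent(R)|=6$. Your route, via the order estimate $\operatorname{ord}\bigl([x,y]\bigr)\mid\gcd\bigl(|Z+x|,|Z+y|\bigr)$ for the biadditive commutator, is cleaner and more conceptual: it shows at once that no Sylow subgroup of $R/Z(R)$ can be cyclic, which disposes of $\mathbb{Z}_2\times\mathbb{Z}_{10}$ and equally of the cases $\mathbb{Z}_2\times\mathbb{Z}_6$, $\mathbb{Z}_2\times\mathbb{Z}_{12}$ and $\mathbb{Z}_3\times\mathbb{Z}_6$ treated elsewhere in the paper. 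What the paper's centralizer-listing approach buys, when the hypothesis \emph{is} satisfiable, is an explicit inventory of centralizers together with their sizes, ready to be fed into~\eqref{comformula} for $\Pr(R)$; in the present case, however, that machinery is running on an empty premise.
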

\begin{proof}
Let $Z := Z(R)$. If $R/Z \cong {\mathbb{Z}}_2 \times {\mathbb{Z}}_{10}$ then there exist two elements $a, b$ of $R$ such that $ab \neq ba$ and 
\[ 
\frac{R}{Z} = \langle Z + a, Z + b \;:\; 2(Z + a) = 10(Z + b) = Z \rangle. 
\]
If $S/Z$ is additive non-trivial subgroup of $R/Z$ then $|S/Z| = 2, 4, 5$ or $10$. Hence, some of the proper additive subgroups of $R$ properly containing $Z$ are
\begin{align*}
S_1 &:= C_R(a) = Z \cup (Z + a),\\
S_2 &:= C_R(b) = Z \cup (Z + b) \cup (Z + 2b) \cup \dots \cup (Z + 9b),\\
S_3 &:= C_R(a + b) \\
 &= Z \cup (Z + (a + b)) \cup  (Z + 2(a + b)) \cup \dots \cup (Z + 9(a + b)),\\
S_4 &:= C_R(a + 2b)\\
 &= Z \cup (Z + (a + 2b)) \cup  (Z + 2(a + 2b)) \cup \dots \cup (Z + 9(a + 2b))\text{ and} \\
S_5 &:= C_R(a + 5b) = Z \cup (Z + (a + 5b)).      
\end{align*}
Now for any $x \in R \setminus Z$,  we have $Z + x$ is equal to  $Z + k$ for some $k \in \{a, mb, a + mb \;:\; 1 \leq m \leq 9 \}$. Therefore $C_R(x) = C_R(k)$. Again, let $y \in S_j - Z$ for some $j \in \{1, 2, \dots, 5\}$, then $C_R(y) \neq S_q$, where $1 \leq q \,(\neq j) \leq 5$. Thus $C_R(y) = S_j$. Thus the proposition follows.
\end{proof}

Following theorem gives some information regarding $8$-centralizer finite rings.

\begin{theorem}\label{z4*z4}
Let $R$ be a finite  ring. If  $\frac{R}{Z(R)} \cong {\mathbb{Z}}_4 \times {\mathbb{Z}}_4, {\mathbb{Z}}_2 \times {\mathbb{Z}}_{12}$ or ${\mathbb{Z}}_3 \times {\mathbb{Z}}_6$ then $R$ is $8$-centralizer.
\end{theorem}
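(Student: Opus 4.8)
The plan is to treat the three isomorphism types of $\frac{R}{Z(R)}$ one at a time, but in each case to follow verbatim the scheme used in the proofs of Theorems~\ref{3chdc} and~\ref{z2*z10}. Write $Z := Z(R)$ and first fix generators of the central factor: elements $a, b \in R$ with $ab \neq ba$ and $\frac{R}{Z} = \langle Z + a, Z + b \rangle$ with the prescribed additive orders (namely $4(Z+a) = 4(Z+b) = Z$ when $\frac{R}{Z} \cong \Z_4 \times \Z_4$; $2(Z+a) = 12(Z+b) = Z$ when $\frac{R}{Z} \cong \Z_2 \times \Z_{12}$; and $3(Z+a) = 6(Z+b) = Z$ when $\frac{R}{Z} \cong \Z_3 \times \Z_6$). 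The structural principle carried over from the earlier proofs is that every proper additive subgroup of $R$ properly containing $Z$ corresponds to a proper nontrivial subgroup of $\frac{R}{Z}$, and that for $x \in R \setminus Z$ the centralizer $C_R(x)$ is the subgroup of this form generated over $Z$ by the coset $Z + x$ and its multiples.

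The substantive work is to exhibit, in each of the three cases, exactly seven proper additive subgroups $S_1, \dots, S_7$ of $R$, each properly containing $Z$ and each realized as a centralizer $C_R(k)$ for a suitable $k \in R \setminus Z$. The natural candidates are read off from the subgroup lattice of $\frac{R}{Z}$: for $\frac{R}{Z} \cong \Z_4 \times \Z_4$, the cosets of $a,\, b,\, a+b,\, a+2b,\, 2a+b,\, a+3b$ generate six distinct subgroups of order $4|Z|$, to which one adjoins the further centralizers coming from the $2$-torsion cosets; for $\Z_2 \times \Z_{12}$ and $\Z_3 \times \Z_6$ one selects analogous coset representatives. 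With the list in hand, the verification splits into two parts exactly as before: first, that for every $x \in R \setminus Z$ the coset $Z+x$ lies in one of the $S_j$, so $C_R(x)$ coincides with the corresponding $C_R(k)$; and second, that for $y \in S_j \setminus Z$ one has $C_R(y) = S_j$ while $C_R(y) \neq S_q$ for $q \neq j$. This yields $\Cent(R) = \{R, S_1, \dots, S_7\}$, whence $|\Cent(R)| = 8$.

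The hard part will be the completeness-and-irredundancy bookkeeping, which must be done three times over. Unlike $\Z_p \times \Z_p$, whose only proper nontrivial subgroups are the $p+1$ lines, each of the present central factors has a far richer subgroup lattice containing cyclic subgroups of several different orders together with non-cyclic ones; so the delicate points are (i) confirming that no non-central coset escapes all seven chosen subgroups, and (ii) confirming that the seven are genuinely distinct as centralizers and that none is forced to enlarge because one candidate is contained in another, so that the centralizer of an element lying in several candidates is unambiguous. Pinning down precisely which subgroups of $\frac{R}{Z}$ occur as centralizers, and checking that there are exactly seven of them, is the crux; once this is settled for all three groups, the final count $|\Cent(R)| = 7 + 1 = 8$ follows at once from $\Cent(R) = \{R\} \cup \{S_1, \dots, S_7\}$.
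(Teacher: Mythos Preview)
Your proposal is correct and follows essentially the same approach as the paper: handle the three isomorphism types of $R/Z(R)$ separately, choose generators $a,b$ with $ab\neq ba$, exhibit seven explicit centralizers $S_1,\dots,S_7$ as unions of cosets of $Z$, and then verify that every non-central element has centralizer equal to one of these while the $S_j$ are pairwise distinct. The paper executes exactly this plan, writing out the seven $S_j$'s explicitly in each of the three cases and performing the coverage/distinctness check you outline.
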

\begin{proof}
Firstly suppose that  $\frac{R}{Z(R)} \cong {\mathbb{Z}}_4 \times {\mathbb{Z}}_4$. Then there exist two elements $a, b \in R$ such that $ab \ne ba$ and
\[
\frac{R}{Z(R)} = \langle a + Z, b + Z : 4(a + Z) = 4(b + Z) = Z \rangle
\] 
where $Z := Z(R)$. 
If $S/Z$ is additive non-trivial subgroup of $R/Z$ then $|S/Z| = 2, 4$ or $8$. Therefore, some of the proper additive subgroups of $R$ properly containing $Z$ are
\begin{align*}
S_m &:= C_R(a + mb) \\
&= Z \cup (Z + (a + mb)) \cup (Z + 2(a + mb)) \cup (Z + 3(a + mb)),\\
      & \qquad \text{where } 1 \leq m \leq 3,\\
S_4 &:= C_R(2a + b)\\
&= Z \cup (Z + (2a + b)) \cup (Z + 2(2a + b))) \cup (Z + 3(2a + b)),\\
S_5 &:= C_R(2a +2b) = Z \cup  (Z + (2a + 2b)), \\            
S_6 &:= C_R(a) = Z \cup (Z + a) \cup (Z + 2a) \cup (Z + 3a) \text{ and} \\
S_7 &:= C_R(b) = Z \cup (Z + b) \cup (Z + 2b) \cup (Z + 3b).      
\end{align*}
Now for any $x \in R \setminus Z$,  we have $Z + x$ is equal to  $Z + k$ for some $k \in \{ma, mb, ma + mb: 1 \leq m \leq 3 \}$. Therefore $C_R(x) = C_R(k)$. Again, let $y \in S_j - Z$ for some $j \in \{1, 2, \dots, 7\}$, then $C_R(y) \neq S_q$, where $1 \leq q \, (\neq j) \leq 7$. Thus $C_R(y) = S_j$. This shows that $R$ is  $8$-centralizer.

If $R/Z \cong {\mathbb{Z}}_2 \times {\mathbb{Z}}_{12}$ then there exist two elements $a, b$ of $R$ such that $ab \neq ba$ and 
\[ 
\frac{R}{Z} = \langle Z + a, Z + b \;:\; 2(Z + a) = 12(Z + b) = Z \rangle. 
\]
If $S/Z$ is additive non-trivial subgroup of $R/Z$ then $|S/Z| = 2, 3, 4, 6, 8$ or $12$. Hence, some of the proper additive subgroups of $R$ properly containing $Z$ are
\allowdisplaybreaks{
\begin{align*}
S_1 &:= C_R(a) = Z \cup (Z + a),\\
S_2 &:= C_R(b) = Z \cup (Z + b) \cup (Z + 2b) \cup \dots \cup (Z + 9b),\\
S_3 &:= C_R(a + b)\\
 &= Z \cup (Z + (a + b)) \cup  (Z + 2(a + b)) \cup \dots \cup (Z + 11(a + b)),\\
S_4 &:= C_R(a + 2b) \\
 &= Z \cup (Z + (a + 2b)) \cup  (Z + 2(a + 2b)) \cup \dots \cup (Z + 5(a + 2b)),\\
S_5 &:= C_R(a + 4b)\\
 &= Z \cup (Z + (a + 4b)) \cup  (Z + 2(a + 4b)) \cup \dots \cup (Z + 5(a + 4b)),\\
S_6 &:= C_R(a + 3b)\\
 &= Z \cup (Z + (a + 3b)) \cup  (Z + 2(a + 3b)) \cup (Z + 3(a + 3b)) \text{ and} \\
S_7 &:= C_R(a + 6b) = Z \cup (Z + (a + 6b)).      
\end{align*}
}
Now for any $x \in R \setminus Z$,  we have $Z + x$ is equal to  $Z + k$ for some $k \in \{a, mb, a + mb \;:\; 1 \leq m \leq 11 \}$. Therefore $C_R(x) = C_R(k)$. Again, let $y \in S_j - Z$ for some $j \in \{1, 2, \dots, 7\}$, then $C_R(y) \neq S_q$, where $1 \leq q \,(\neq j) \leq 7$. Thus $C_R(y) = S_j$. This shows that $R$ is  $8$-centralizer.

If $\frac{R}{Z(R)} \cong {\mathbb{Z}}_3 \times {\mathbb{Z}}_6$ then there exist two elements $a, b \in R$ such that $ab \ne ba$ and
\[
\frac{R}{Z(R)} = \langle a + Z, b + Z : 3(a + Z) = 6(b + Z) = Z \rangle
\] 
where $Z := Z(R)$. 
If $S/Z$ is additive non-trivial subgroup of $R/Z$ then $|S/Z| = 3, 6$ or $9$. Therefore, some of the proper additive subgroups of $R$ properly containing $Z$ are
\allowdisplaybreaks{
\begin{align*}
S_1 &:= C_R(a) = Z \cup (Z + a) \cup (Z + 2a),\\
S_2 &:= C_R(b) = Z \cup (Z + b) \cup (Z + 2b) \cup \dots \cup (Z + 5b),\\
S_3 &:= C_R(a + b)\\
&= Z \cup (Z + (a + b)) \cup (Z + 2(a + b)) \cup \dots \cup 
(Z + 5(a + b)),\\
S_4 &:= C_R(a + 2b) = Z \cup (Z + (a + 2b)) \cup (Z + 2(a + 2b)),\\
S_5 &:= C_R(a + 3b)\\
&= Z \cup (Z + (a + 3b)) \cup (Z + 2(a + 3b)) \cup \dots \cup (Z + 5(a + 3b)),\\            
S_6 &:= C_R(a + 4b) = Z \cup (Z + (a + 4b)) \cup (Z + 2(a + 4b)) \text{ and } \\
S_7 &:= C_R(a + 5b)\\
&= Z \cup (Z + (a + 5b)) \cup (Z + 2(a + 5b)) \cup \dots \cup (Z + 5(a + 5b)).      
\end{align*}
}
Now for any $x \in R \setminus Z$,  we have $Z + x$ is equal to  $Z + k$ for some $k \in \{ma, nb, ma + nb : m \in \{1, 2\}, 1 \leq n \leq 5 \}$. Therefore $C_R(x) = C_R(k)$. Again, let $y \in S_j - Z$ for some $j \in \{1, 2, \dots, 7\}$, then $C_R(y) \neq S_q$, where $1 \leq q \,(\neq j) \leq 7$. Thus $C_R(y) = S_j$. This shows that $R$ is  $8$-centralizer.
\end{proof}

\noindent We conclude this section with the following result which proves the non-existence of rings with central factor isomorphic to ${\mathbb{Z}}_2 \times {\mathbb{Z}}_4$.

\begin{theorem}\label{z2*z4}
There is no finite non-commutative ring $R$ such that $\frac{R}{Z(R)}$ is isomorphic to ${\mathbb{Z}}_2 \times {\mathbb{Z}}_4$.
\end{theorem}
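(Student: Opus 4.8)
The plan is to argue by contradiction, exploiting the fact that the commutator $[x,y] := xy - yx$ is biadditive in $x$ and $y$ and vanishes whenever either argument lies in $Z := Z(R)$. Assuming $R/Z \cong {\mathbb{Z}}_2 \times {\mathbb{Z}}_4$, I would first fix elements $a, b \in R$ with $ab \neq ba$ whose cosets generate the quotient, so that
\[
\frac{R}{Z} = \langle Z + a,\; Z + b \;:\; 2(Z + a) = 4(Z + b) = Z \rangle,
\]
with $Z + a$ of additive order $2$ and $Z + b$ of additive order $4$. In particular $2a \in Z$, while $2b \notin Z$ since the image of $b$ has additive order $4$.

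The crucial relation comes from $2a \in Z$: because $2a$ is central it commutes with $b$, giving $(2a)b = b(2a)$, that is $2(ab - ba) = 0$, or equivalently $2[a,b] = 0$. I would then turn attention to the element $2b$. Using biadditivity, $[a, 2b] = 2[a,b] = 0$, so $2b$ commutes with $a$; and trivially $[b, 2b] = 2[b,b] = 0$, so $2b$ commutes with $b$ as well.

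Next I would invoke the fact that the cosets $Z + a$ and $Z + b$ generate $R/Z$ additively, so every element of $R$ has the form $z + ma + nb$ with $z \in Z$ and $m, n \in {\mathbb{Z}}$. Since $2b$ commutes with each central element, with $a$, and with $b$, biadditivity of the commutator forces $[2b,\, z + ma + nb] = 0$ for all such elements, whence $2b \in Z$. This contradicts $2b \notin Z$ established above, and the proof is complete.

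The substance of the argument lies entirely in the asymmetry of the two generators: one has order $2$ modulo the center and the other has order $4$. The order-$2$ generator forces $2[a,b] = 0$, while the order-$4$ generator supplies the element $2b \notin Z$ that this very relation nonetheless pushes into the center. The main obstacle, such as it is, is to be careful that commuting with the two generators (together with all central elements) genuinely certifies membership in $Z$; this rests on the biadditivity of $[\cdot,\cdot]$ and on $Z + a, Z + b$ spanning $R/Z$, both routine but essential to state precisely. By way of sanity check, for a quotient such as ${\mathbb{Z}}_4 \times {\mathbb{Z}}_4$ one has $2a \notin Z$, so the relation $2[a,b] = 0$ fails and no contradiction arises, consistent with Theorem \ref{z4*z4}.
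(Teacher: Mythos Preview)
Your proof is correct and takes a genuinely different route from the paper's. The paper argues by enumerating the proper centralizers explicitly: it lists $C_R(a)$, $C_R(b)$, $C_R(a+b)$, $C_R(a+2b)$ and checks that (together with $R$ itself) these are the only centralizers, so $R$ would be $5$-centralizer; it then invokes the earlier classification that any $5$-centralizer ring has $R/Z(R) \cong \mathbb{Z}_3 \times \mathbb{Z}_3$, a contradiction. Your argument is more self-contained and more conceptual: from $2a \in Z$ you extract $2[a,b]=0$, which forces $2b$ into the center by biadditivity, contradicting the assumed order of $Z+b$. This avoids any reliance on the external $5$-centralizer characterization and isolates exactly the structural obstruction (the mismatch between the orders of the two generators). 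The paper's approach, on the other hand, fits naturally into its broader program of counting centralizers and ties the result to the classification theorems it is developing.
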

\begin{proof}
Let $R$ be a finite non-commutative ring such that $\frac{R}{Z(R)} \cong {\mathbb{Z}}_2 \times {\mathbb{Z}}_4$. Then there exist two elements $a, b \in R$ such that $ab \ne ba$ and
\[
\frac{R}{Z(R)} = \langle a + Z, b + Z : 2(a + Z) = 4(b + Z) = Z\rangle
\] 
where $Z := Z(R)$.
If $S/Z$ is additive non-trivial subgroup of $R/Z$ then $|S/Z| = 2$ or $4$. So, some of the proper additive subgroups of $R$ properly containing $Z$ are
\begin{align*}
S_1 &:= C_R(a + b)\\
&= Z \cup (Z + (a + b)) \cup (Z + 2(a + b)) \cup (Z + 3(a + b)),\\
S_2 &:= C_R(a + 2b) = Z \cup (Z + (a + 2b)), \\     
S_3 &:= C_R(a) = Z \cup (Z + a) \text{ and} \\
S_4 &:= C_R(b)= Z \cup (Z + b) \cup (Z + 2b) \cup (Z + 3b).      
\end{align*}
Now for any $x \in R \setminus Z$,  we have $Z + x$ is equal to  $Z + k$ for some $k \in \{a, mb, a + mb: 1 \leq m \leq 3 \}$. Therefore $C_R(x) = C_R(k)$. Again, let $y \in S_j - Z$ for some $j \in \{1, 2, 3, 4\}$, then $C_R(y) \neq S_q$, where $1 \leq q \, (\neq j) \leq 4$. Thus $C_R(y) = S_j$. Thus $R$ is a $5$-centralizer ring. Hence, by Theorem $4.3$ in \cite{dbn2014}, we have $\frac{R}{Z(R)} \cong {\mathbb{Z}}_3 \times {\mathbb{Z}}_3$, a contradiction. This completes the proof.
\end{proof}

\section{Commuting probabilities}
In this section, we compute the commuting probabilities of $n$-centralizer finite rings for $n \leq 7$. It is clear that $R$ is $1$-centralizer if and only if it is commutative. Therefore, $R$ is $1$-centralizer if and only if $\Pr(R) = 1$.
In {\rm \cite[Theorem 2.6]{dbn2014}}, we have proved that there is no $n$-centralizer ring for $n = 2, 3$. The following two results give  commuting probabilities of $4$-centralizer and $5$-centralizer finite rings. 
\begin{theorem}
Let $R$ be a  finite   ring. Then $R$ is $4$-centralizer if and only if $\Pr(R) = \frac{5}{8}$.
\end{theorem}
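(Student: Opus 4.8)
The plan is to obtain the stated equivalence by simply composing two biconditionals that are already available in the excerpt, so that no fresh computation with \eqref{comformula} is needed. The key observation is that $4$-centralizer rings and the value $\Pr(R)=\frac{5}{8}$ are each separately characterized by the same central factor, namely $\frac{R}{Z(R)}\cong{\mathbb{Z}}_2\times{\mathbb{Z}}_2$.

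Concretely, I would first invoke Theorem \ref{4,5-cent}(a), which asserts that a finite ring $R$ is $4$-centralizer if and only if $\frac{R}{Z(R)}\cong{\mathbb{Z}}_2\times{\mathbb{Z}}_2$. I would then invoke the first part of Theorem \ref{machale1-2} (MacHale's result), which asserts that $\frac{R}{Z(R)}\cong{\mathbb{Z}}_2\times{\mathbb{Z}}_2$ if and only if $\Pr(R)=\frac{5}{8}$. Chaining these two equivalences through the common condition $\frac{R}{Z(R)}\cong{\mathbb{Z}}_2\times{\mathbb{Z}}_2$ yields at once that $R$ is $4$-centralizer if and only if $\Pr(R)=\frac{5}{8}$, which is the claim. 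As a consistency check one may note that this is exactly the $p=2$ instance of Theorem \ref{3chdc}: there $\frac{R}{Z(R)}\cong{\mathbb{Z}}_p\times{\mathbb{Z}}_p$ forces $R$ to be $(p+2)$-centralizer with $\Pr(R)=\frac{p^2+p-1}{p^3}$, and setting $p=2$ recovers a $4$-centralizer ring with $\Pr(R)=\frac{5}{8}$.

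Because both halves of the argument are quoted as established results, there is essentially no computational obstacle remaining; the real content has been discharged into Theorems \ref{4,5-cent} and \ref{machale1-2}. The only point requiring mild care is logical hygiene: one must verify that the intermediate condition $\frac{R}{Z(R)}\cong{\mathbb{Z}}_2\times{\mathbb{Z}}_2$ is used in matching directions in the two cited biconditionals, so that the composition is a genuine if-and-only-if rather than a one-way implication. Since each cited statement is itself an equivalence, this is immediate, and the proof consists of recording the two-step chain.
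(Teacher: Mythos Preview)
Your proposal is correct and matches the paper's approach: the paper's proof is the single line ``follows from Theorem \ref{4,5-cent}(a) and Theorem \ref{3chdc},'' i.e.\ the same chaining through $R/Z(R)\cong\mathbb{Z}_2\times\mathbb{Z}_2$ that you describe. Your substitution of Theorem \ref{machale1-2} for Theorem \ref{3chdc} is if anything preferable, since Theorem \ref{machale1-2} is stated as a genuine biconditional and so supplies the converse direction ($\Pr(R)=\tfrac{5}{8}\Rightarrow R/Z(R)\cong\mathbb{Z}_2\times\mathbb{Z}_2$) that Theorem \ref{3chdc} alone does not.
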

\begin{proof}
The proof follows from Theorem \ref{4,5-cent}(a) and Theorem \ref{3chdc}.
\end{proof}

\begin{theorem}
If $R$ is a finite $5$-centralizer ring then $\Pr(R) = \frac{11}{27}$. 
\end{theorem}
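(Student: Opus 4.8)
The plan is to reduce the statement to the two structural results already established, since the $5$-centralizer condition pins down the central factor completely. First I would invoke Theorem~\ref{4,5-cent}(b), which asserts that a finite ring $R$ is $5$-centralizer if and only if $\frac{R}{Z(R)} \cong {\mathbb{Z}}_3 \times {\mathbb{Z}}_3$. Thus the hypothesis that $R$ is $5$-centralizer immediately forces $\frac{R}{Z(R)} \cong {\mathbb{Z}}_3 \times {\mathbb{Z}}_3$.

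Having identified the central factor, I would then apply Theorem~\ref{3chdc} with the prime $p = 3$. That theorem computes $\Pr(R) = \frac{p^2 + p - 1}{p^3}$ whenever $\frac{R}{Z(R)} \cong {\mathbb{Z}}_p \times {\mathbb{Z}}_p$, so substituting $p = 3$ yields $\Pr(R) = \frac{9 + 3 - 1}{27} = \frac{11}{27}$, which is exactly the desired value. This mirrors the argument of the preceding theorem on $4$-centralizer rings, which combined Theorem~\ref{4,5-cent}(a) with Theorem~\ref{3chdc} at $p = 2$.

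There is essentially no obstacle in this argument: all the substantive work lies in the coset-counting already carried out in the proof of Theorem~\ref{3chdc} and in the characterization of Theorem~\ref{4,5-cent}(b). The only point worth noting is that Theorem~\ref{3chdc} applies with $p = 3$ taken as a genuine prime divisor of $|R|$ without any minimality requirement, so its conclusion is available directly. Hence the proof is a one-line combination of the two cited results.
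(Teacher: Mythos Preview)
Your proposal is correct and follows essentially the same approach as the paper: reduce to the characterization $R/Z(R)\cong\mathbb{Z}_3\times\mathbb{Z}_3$ via Theorem~\ref{4,5-cent}(b), then read off $\Pr(R)=\tfrac{11}{27}$ from the formula $\tfrac{p^2+p-1}{p^3}$. The only cosmetic difference is that the paper cites Theorem~\ref{machale1-2} for the second step, whereas you cite Theorem~\ref{3chdc}; your choice is in fact slightly cleaner, since Theorem~\ref{3chdc} does not carry the hypothesis that $p$ be the smallest prime dividing $|R|$.
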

\begin{proof}
The proof follows from Theorem \ref{4,5-cent}(b) and Theorem \ref{machale1-2}.
\end{proof}

The following characterization of finite $6$-centralizer  rings is    useful in computing their commuting probabilities.

\begin{theorem}\label{classification6c}
If $R$ is a $6$-centralizer finite ring then $\frac{R}{Z(R)}$ is isomorphic to
${\mathbb{Z}}_2 \times {\mathbb{Z}}_2 \times {\mathbb{Z}}_2, {\mathbb{Z}}_2 \times {\mathbb{Z}}_6, {\mathbb{Z}}_2 \times {\mathbb{Z}}_8, {\mathbb{Z}}_2 \times {\mathbb{Z}}_2 \times {\mathbb{Z}}_4 \text{ or } {\mathbb{Z}}_2 \times {\mathbb{Z}}_2 \times {\mathbb{Z}}_2 \times {\mathbb{Z}}_2.
$
\end{theorem}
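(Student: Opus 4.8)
The plan is to pass to the additive central factor $G := R/Z(R)$ and to translate the counting of centralizers into a statement about how $G$ is covered by a small family of proper subgroups; the classification then follows by bounding $|G|$ and inspecting the finitely many survivors.

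First I would set up the basic dictionary. Writing $Z := Z(R)$, the centralizer $C_R(x)$ depends only on the coset $Z+x$ (if $x-x'\in Z$ then $rx-xr=rx'-x'r$ for every $r$), so $x\mapsto C_R(x)/Z$ is a well-defined map from $G$ into the subgroup lattice of $G$, and $|\Cent(R)|$ is exactly the number of distinct subgroups $\bar C(x):=C_R(x)/Z$. Here $x\in Z$ gives $\bar C(x)=G$, while $x\notin Z$ gives a proper subgroup, so the hypothesis $|\Cent(R)|=6$ says there are precisely five distinct proper subgroups $H_1,\dots,H_5$ of this form. Since every non-zero $\bar x\in G$ lies in $\bar C(x)$, these cover $G$, i.e.\ $G=\bigcup_{i=1}^5 H_i$; and since $\bigcap_{x\in R}C_R(x)=Z$, the five subgroups satisfy $\bigcap_{i=1}^5 H_i=\{0\}$. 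Note that $G$ is abelian and, because $R$ is non-commutative ($6$-centralizer rings are not commutative), non-cyclic by the lemma of MacHale recalled in the introduction.

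Next I would bound $G$. The relation $\bigcap_{i=1}^5 H_i=\{0\}$ embeds $G$ into $\prod_{i=1}^5 G/H_i$, so it suffices to bound the indices $[G:H_i]$. Two facts feed this: each centralizer contains the cyclic group it centralizes, $\langle\bar x\rangle\subseteq\bar C(x)$, so $[G:\bar C(x)]\le[G:\langle\bar x\rangle]$, which is small whenever $\bar x$ has large order; and, since $G$ is a union of the five proper subgroups $H_i$, B.~H.~Neumann's lemma on irredundant subgroup covers bounds the indices occurring in a minimal subcover. Combining these controls $|G|$ and leaves only finitely many finite abelian non-cyclic groups to examine (the alternating biadditive commutator map $(\bar x,\bar y)\mapsto xy-yx$ on $G$ can also be used to constrain the exponent prime by prime). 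The $\mathbb{Z}_p\times\mathbb{Z}_p$ families are then removed directly: $\mathbb{Z}_2\times\mathbb{Z}_2$ and $\mathbb{Z}_3\times\mathbb{Z}_3$ are impossible because $R$ is neither $4$- nor $5$-centralizer (Theorem~\ref{4,5-cent}), and $\mathbb{Z}_p\times\mathbb{Z}_p$ with $p\ge 5$ would force $R$ to be $(p+2)$-centralizer with $p+2\ge 7$ by Theorem~\ref{3chdc}.

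Finally I would run through the resulting candidate list and, for each group, count the proper centralizer subgroups exactly as in Theorems~\ref{z2*z10} and~\ref{z4*z4}: fix generators of $G$, write down the proper subgroups $S_j/Z$ that arise as $\bar C(x)$ along the distinct coset directions, and check how many are distinct. Candidates giving a count other than six are discarded with the results already in hand: $\mathbb{Z}_4\times\mathbb{Z}_4$, $\mathbb{Z}_2\times\mathbb{Z}_{12}$ and $\mathbb{Z}_3\times\mathbb{Z}_6$ are $8$-centralizer by Theorem~\ref{z4*z4}; $\mathbb{Z}_2\times\mathbb{Z}_4$ cannot occur at all by Theorem~\ref{z2*z4}; and $\mathbb{Z}_2\times\mathbb{Z}_2$, $\mathbb{Z}_3\times\mathbb{Z}_3$ give $4$ and $5$. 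The groups that survive with exactly five proper centralizer subgroups are precisely $\mathbb{Z}_2\times\mathbb{Z}_2\times\mathbb{Z}_2$, $\mathbb{Z}_2\times\mathbb{Z}_6$, $\mathbb{Z}_2\times\mathbb{Z}_8$, $\mathbb{Z}_2\times\mathbb{Z}_2\times\mathbb{Z}_4$ and $\mathbb{Z}_2\times\mathbb{Z}_2\times\mathbb{Z}_2\times\mathbb{Z}_2$, which is the asserted list. I expect the hard part to be the completeness of this enumeration rather than any single computation: one must convert the condition ``$G$ is covered by five proper subgroups with trivial intersection'' into an explicit, sharp bound on $|G|$, $\exp(G)$ and the number of invariant factors, and then confirm the centralizer count for every candidate — in particular ruling out that some group outside the list yields exactly five proper centralizer subgroups for some admissible coset geometry. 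Making the Neumann-type index bound quantitative enough to keep the search finite, and treating the mixed-order case $\mathbb{Z}_2\times\mathbb{Z}_6$ on the same footing as the $2$-groups, are the delicate points.
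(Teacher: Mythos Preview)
Your approach differs from the paper's chiefly in how the size of $G=R/Z(R)$ is controlled. The paper's proof is a two-line reduction: it invokes Theorem~3.1 of \cite{dbn2015}, which asserts that a $6$-centralizer finite ring satisfies $|R:Z(R)|\in\{8,12,16\}$, and then inspects the ten abelian groups of these orders, discarding the cyclic ones and eliminating $\mathbb{Z}_2\times\mathbb{Z}_4$ and $\mathbb{Z}_4\times\mathbb{Z}_4$ via Theorems~\ref{z2*z4} and~\ref{z4*z4}. You instead try to manufacture such a bound from the covering $G=\bigcup_{i=1}^{5}H_i$ with $\bigcap_i H_i=\{0\}$ together with a Neumann-type index estimate. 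That is a reasonable programme---and is presumably close in spirit to how \cite{dbn2015} obtains the cited result---but in your write-up it is not actually carried out: you yourself flag ``making the Neumann-type index bound quantitative'' as the hard part and leave it open. Two technical points make this step genuinely nontrivial: the standard Neumann lemma only guarantees that \emph{some} covering subgroup has index bounded by the number of cosets, not that all of them do, so the embedding $G\hookrightarrow\prod_i G/H_i$ does not by itself yield a usable bound on $|G|$; and your five proper centralizers need not form an irredundant cover, since distinct centralizers can be nested. Your eliminations via Theorems~\ref{3chdc}, \ref{4,5-cent}, \ref{z4*z4} and \ref{z2*z4} are correct for the groups you name, but until the finiteness (indeed, the sharp list $\{8,12,16\}$) is established, the enumeration is not complete---and that is precisely the input the paper imports from \cite{dbn2015}.
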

\begin{proof}
If $R$ is a $6$-centralizer finite ring then by Theorem $3.1$ in \cite{dbn2015}, we have $|R : Z(R)| = 8, 12$ or $16$. 
By  fundamental theorem of finite abelian groups, $\frac{R}{Z(R)}$ is isomorphic to ${\mathbb{Z}}_8, {\mathbb{Z}}_2 \times {\mathbb{Z}}_4$, ${\mathbb{Z}}_2 \times {\mathbb{Z}}_2 \times {\mathbb{Z}}_2, {\mathbb{Z}}_{12}$, ${\mathbb{Z}}_2 \times {\mathbb{Z}}_6$, ${\mathbb{Z}}_{16}, {\mathbb{Z}}_2 \times {\mathbb{Z}}_8, {\mathbb{Z}}_4 \times {\mathbb{Z}}_4, {\mathbb{Z}}_2 \times {\mathbb{Z}}_2 \times {\mathbb{Z}}_4$ or ${\mathbb{Z}}_2 \times {\mathbb{Z}}_2 \times {\mathbb{Z}}_2 \times {\mathbb{Z}}_2$.

 Since $R$ is non-commutative,  $\frac{R}{Z(R)}$ is not isomorphic to ${\mathbb{Z}}_8, {\mathbb{Z}}_{12}$ or ${\mathbb{Z}}_{16}$. Also, by Theorem \ref{z4*z4} and Theorem \ref{z2*z4}, we have $\frac{R}{Z(R)}$ is not isomorphic to ${\mathbb{Z}}_4 \times {\mathbb{Z}}_4$ and  ${\mathbb{Z}}_2 \times {\mathbb{Z}}_4$. Hence the result follows. 
\end{proof}

\begin{theorem}\label{classificationPr6c}
If $R$ is a $6$-centralizer finite ring such that $\frac{R}{Z(R)}$ is not isomorphic to ${\mathbb{Z}}_2 \times {\mathbb{Z}}_2 \times {\mathbb{Z}}_4$ and ${\mathbb{Z}}_2 \times {\mathbb{Z}}_2 \times {\mathbb{Z}}_2 \times {\mathbb{Z}}_2$ then $\Pr(R) \in \{\frac{7}{16}, \frac{35}{72}, \frac{29}{64}\}$.
\end{theorem}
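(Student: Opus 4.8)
The plan is to combine the classification in Theorem~\ref{classification6c} with a direct application of the counting formula \eqref{comformula}, treating the admissible central factors one at a time. By Theorem~\ref{classification6c}, excluding the cases ${\mathbb{Z}}_2\times{\mathbb{Z}}_2\times{\mathbb{Z}}_4$ and ${\mathbb{Z}}_2\times{\mathbb{Z}}_2\times{\mathbb{Z}}_2\times{\mathbb{Z}}_2$ leaves exactly three possibilities: $\frac{R}{Z(R)}\cong{\mathbb{Z}}_2\times{\mathbb{Z}}_2\times{\mathbb{Z}}_2$, ${\mathbb{Z}}_2\times{\mathbb{Z}}_6$, or ${\mathbb{Z}}_2\times{\mathbb{Z}}_8$, of orders $8$, $12$ and $16$ respectively. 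For each of these I would show that $\Pr(R)$ is uniquely determined, equal to $\frac{7}{16}$, $\frac{35}{72}$ and $\frac{29}{64}$ in turn, so that the three values together give the asserted set.

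For each central factor the key step is to write down the five proper centralizers explicitly, exactly as in the proofs of Theorems~\ref{z2*z10} and \ref{z4*z4}: fixing generators $a,b$ of $\frac{R}{Z}$ with $ab\ne ba$, I would list the additive subgroups $S_1,\dots,S_5$ of $R$ properly containing $Z:=Z(R)$ that arise as centralizers, decompose each into cosets of $Z$, and read off $|S_i|=|S_i/Z|\,|Z|$; these sizes are then fed into \eqref{comformula}. For instance, when $\frac{R}{Z}\cong{\mathbb{Z}}_2\times{\mathbb{Z}}_2\times{\mathbb{Z}}_2$ the five proper centralizers consist of one subgroup with $|S/Z|=4$ and four subgroups with $|S/Z|=2$; counting the non-central elements (namely $3|Z|$ elements with centralizer of size $4|Z|$ and $4|Z|$ elements with centralizer of size $2|Z|$) gives $\sum_{r\in R\setminus Z}|C_R(r)| = 12|Z|^2+8|Z|^2 = 20|Z|^2$, whence $\Pr(R)=\frac{|Z|}{8|Z|}+\frac{20|Z|^2}{64|Z|^2}=\frac{7}{16}$. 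The cases ${\mathbb{Z}}_2\times{\mathbb{Z}}_6$ and ${\mathbb{Z}}_2\times{\mathbb{Z}}_8$ are handled by the same bookkeeping, the coset decompositions of the relevant cyclic and non-cyclic centralizers producing $\frac{35}{72}$ and $\frac{29}{64}$.

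The main obstacle is the centralizer enumeration itself rather than the final arithmetic. Unlike the situation $\frac{R}{Z}\cong{\mathbb{Z}}_p\times{\mathbb{Z}}_p$ of Theorem~\ref{3chdc}, where the proper centralizers meet pairwise exactly in $Z$ and hence partition $R\setminus Z$, for the factors ${\mathbb{Z}}_2\times{\mathbb{Z}}_6$ and ${\mathbb{Z}}_2\times{\mathbb{Z}}_8$ distinct centralizers may share cosets beyond $Z$ (for example a cyclic centralizer and a Sylow subgroup it meets). Consequently I must evaluate the sum in \eqref{comformula} coset by coset, assigning to each non-central coset $Z+x$ the single subgroup $C_R(x)$, which depends only on that coset, rather than summing blindly over the five subgroups, since the latter would double-count the overlaps. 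Verifying that the listed $S_i$ are precisely the centralizers, each $|C_R(x)/Z|$ being a proper divisor of $|R/Z|$ that is a multiple of the additive order of $Z+x$, and that there are exactly five of them, is the delicate part; once this is in place, substitution into \eqref{comformula} yields the three probabilities and completes the proof.
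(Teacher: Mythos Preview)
Your proposal is correct and follows essentially the same route as the paper: reduce via Theorem~\ref{classification6c} to the three central factors ${\mathbb Z}_2^3$, ${\mathbb Z}_2\times{\mathbb Z}_6$, ${\mathbb Z}_2\times{\mathbb Z}_8$, list the five proper centralizers in each case as unions of cosets of $Z$, and feed the resulting sizes into \eqref{comformula} coset by coset. Your computation in the ${\mathbb Z}_2^3$ case (one centralizer with $|S/Z|=4$, four with $|S/Z|=2$) and your warning about overlapping centralizers in the non-elementary cases both match what the paper does; the only cosmetic difference is that for ${\mathbb Z}_2^3$ the paper uses three generators $a,b,c$ rather than two.
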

\begin{proof}
Since $R$ is $6$-centralizer finite ring,  by Theorem \ref{classification6c}, we have 
\[
\frac{R}{Z} \cong {\mathbb{Z}}_2 \times {\mathbb{Z}}_2 \times {\mathbb{Z}}_2, {\mathbb{Z}}_2 \times {\mathbb{Z}}_6 \text{ or } {\mathbb{Z}}_2 \times {\mathbb{Z}}_8
\]
where $Z := Z(R)$.

 If $\frac{R}{Z} \cong {\mathbb{Z}}_2 \times {\mathbb{Z}}_2 \times {\mathbb{Z}}_2$ then there exist three non-central elements $a, b, c$ of $R$ not commuting with each other simultaneously and 
\[ 
\frac{R}{Z} = \langle Z + a, Z + b, Z + c \;:\; 2(Z + a) = 2(Z + b) = 2(Z + c) = Z \rangle. 
\]
If $S/Z$ is additive non-trivial subgroup of $R/Z$ then $|S/Z| = 2$ or $4$. Since $|\Cent(R)| = 6$, we have
\[
C_R(k) = C_R(l) = C_R(k + l) = Z \cup (Z + k) \cup (Z + l) \cup (Z + (k + l))
\]
 for some $k, l \in \{a, b, c, a + b, a + c, b + c, a + b + c\}$ and $k \neq l$. Without loss of generality, we can assume that $C_R(b) = C_R(c)$. Hence, some of the proper additive subgroups of $R$ properly containing $Z$ are
\begin{align*}
S_1 &:= C_R(a) = Z \cup (Z + a),\\
S_2 &:= C_R(b) = C_R(c) = C_R(b + c) = Z \cup (Z + b) \cup (Z + c) \cup (Z + (b + c)), \\
S_3 &:= C_R(a + b)= Z \cup (Z + (a + b)),\\
S_4 &:= C_R(a +c) = Z \cup (Z + (a + c))\text{ and} \\
S_5 &:= C_R(a + b + c) = Z \cup (Z + (a + b + c)).      
\end{align*}
Now for any $x \in R \setminus Z$,  we have $Z + x$ is equal to  $Z + k$ for some $k \in \{a, b, c, a + b, a + c, b + c, a + b + c \}$. Therefore $C_R(x) = C_R(k)$. Again, let $y \in S_j - Z$ for some $j \in \{1, 2, \dots, 5\}$, then $C_R(y) \neq S_q$, where $1 \leq q \,(\neq j) \leq 5$. Thus $C_R(y) = S_j$. Hence, by \eqref{comformula} we have 
\[
\Pr(R)  = \frac{|Z|}{8|Z|} + \frac{(4 \times 2) |Z|^2 + (3 \times 4) |Z|^2}{8^2|Z|^2} = \frac{7}{16}.
\] 
If $\frac{R}{Z} \cong {\mathbb{Z}}_2 \times {\mathbb{Z}}_6$ then there exist two elements $a, b$ of $R$ such that $ab \neq ba$ and 
\[ 
\frac{R}{Z} = \langle Z + a, Z + b \;:\; 2(Z + a) = 6(Z + b) = Z \rangle. 
\]
If $S/Z$ is additive non-trivial subgroup of $R/Z$ then $|S/Z| = 2, 3, 4$ or $6$. Hence, some of the proper additive subgroups of $R$ properly containing $Z$ are
\begin{align*}
S_1 &:= C_R(a) = Z \cup (Z + a),\\
S_2 &:= C_R(b) = Z \cup (Z + b) \cup (Z + 2b) \cup \dots \cup (Z + 5b), \\
S_3 &:= C_R(a + b) = Z \cup (Z + (a + b)) \cup  (Z + 2(a + b)) \cup \dots \cup (Z + 5(a + b)),\\
S_4 &:= C_R(a + 2b)\\
&= Z \cup (Z + (a + 2b)) \cup  (Z + 2(a + 2b)) \cup \dots \cup (Z + 5(a + 2b)) \text{ and} \\
S_5 &:= C_R(a + 3b) = Z \cup (Z + (a + 3b)).      
\end{align*}
Now for any $x \in R \setminus Z$,  we have $Z + x$ is equal to  $Z + k$ for some $k \in \{a, mb, a + mb \;:\; 1 \leq m \leq 5 \}$. Therefore $C_R(x) = C_R(k)$. Again, let $y \in S_j - Z$ for some $j \in \{1, 2,   \dots, 5\}$, then $C_R(y) \neq S_q$, where $1 \leq q            \,(\neq j) \leq 5$. Thus $C_R(y) = S_j$. Hence, by \eqref{comformula} we have 
\[
\Pr(R)  = \frac{|Z|}{12|Z|} + \frac{(2 \times 2) |Z|^2 + (9 \times 6) |Z|^2}{{12}^2|Z|^2} = \frac{35}{72}.
\]   
If $\frac{R}{Z} \cong {\mathbb{Z}}_2 \times {\mathbb{Z}}_8$ then there exist two elements $a, b$ of $R$ such that $ab \neq ba$ and 
\[ 
\frac{R}{Z} = \langle Z + a, Z + b \;:\; 2(Z + a) = 8(Z + b) = Z; a, b \in R \rangle. 
\]
If $S/Z$ is additive non-trivial subgroup of $R/Z$ then $|S/Z| = 2, 4$ or $8$. Hence, some of the proper additive subgroups of $R$ properly containing $Z$ are

\begin{align*}
S_1 &:= C_R(a) = Z \cup (Z + a),\\
S_2 &:= C_R(b) = Z \cup (Z + b) \cup (Z + 2b) \cup \dots \cup (Z + 7b), \\
S_3 &:= C_R(a + b) = Z \cup (Z + (a + b)) \cup  (Z + 2(a + b)) \cup \dots \cup (Z + 7(a + b)),\\
S_4 &:= C_R(a + 2b)\\
 &= Z \cup (Z + (a + 2b)) \cup  (Z + 2(a + 2b)) \cup \dots \cup (Z + 4(a + 2b))\text{ and} \\
S_5 &:= C_R(a + 4b) = Z \cup (Z + (a + 4b)).      
\end{align*}
Now for any $x \in R \setminus Z$,  we have $Z + x$ is equal to  $Z + k$ for some $k \in \{a, mb, a + mb \;:\; 1 \leq m \leq 7 \}$. Therefore $C_R(x) = C_R(k)$. Again, let $y \in S_j - Z$ for some $j \in \{1, 2, \dots, 5\}$, then $C_R(y) \neq S_q$, where $1 \leq q \,(\neq j) \leq 5$. Thus $C_R(y) = S_j$. Hence, by \eqref{comformula} we have 
\[
\Pr(R)  = \frac{|Z|}{16|Z|} + \frac{(2 \times 2) |Z|^2 + (11 \times 8) |Z|^2 + (2 \times 4) |Z|^2}{{16}^2|Z|^2} = \frac{29}{64}.
\] 
This completes the proof.
\end{proof}

Now we compute  commuting probability of some finite $7$-centralizer rings. The following result is   useful in this regard.

\begin{proposition}\label{7cent_bc}
Let $R$ be a finite $7$-centralizer ring. Then 
\[
\frac{R}{Z(R)} \cong {\mathbb{Z}}_2 \times {\mathbb{Z}}_2 \times {\mathbb{Z}}_6 \text{ or } {\mathbb{Z}}_5 \times {\mathbb{Z}}_5.
\]
\end{proposition}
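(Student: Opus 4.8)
The plan is to follow, essentially verbatim, the strategy used to prove Theorem~\ref{classification6c}. First I would invoke the index bound for $7$-centralizer rings established in \cite{dbn2015}, the analogue for $n=7$ of the result ``$|R:Z(R)| \in \{8,12,16\}$'' used in Theorem~\ref{classification6c}: for a finite $7$-centralizer ring one has $|R : Z(R)| = 24$ or $25$. Writing $Z := Z(R)$, the additive quotient $\frac{R}{Z}$ is then a finite abelian group of order $24$ or $25$, so by the fundamental theorem of finite abelian groups it is isomorphic to one of
\[
{\mathbb{Z}}_{24},\ {\mathbb{Z}}_2 \times {\mathbb{Z}}_{12},\ {\mathbb{Z}}_2 \times {\mathbb{Z}}_2 \times {\mathbb{Z}}_6,\ {\mathbb{Z}}_{25},\ {\mathbb{Z}}_5 \times {\mathbb{Z}}_5 .
\]

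Next I would discard the candidates one at a time. Since $R$ is non-commutative, $\frac{R}{Z}$ is non-cyclic by \cite[Lemma 1]{dmachale} (recalled in the introduction), which rules out ${\mathbb{Z}}_{24}$ and ${\mathbb{Z}}_{25}$. By Theorem~\ref{z4*z4} every ring with central factor ${\mathbb{Z}}_2 \times {\mathbb{Z}}_{12}$ is $8$-centralizer; since that is a statement valid for all such rings, it is incompatible with $|\Cent(R)| = 7$, and this case is discarded as well. The only surviving possibilities are $\frac{R}{Z} \cong {\mathbb{Z}}_2 \times {\mathbb{Z}}_2 \times {\mathbb{Z}}_6$ (the remaining group of order $24$) and $\frac{R}{Z} \cong {\mathbb{Z}}_5 \times {\mathbb{Z}}_5$; moreover Theorem~\ref{3chdc} with $p=5$ confirms that the latter genuinely yields a $7$-centralizer ring. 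This produces exactly the two groups claimed in the statement.

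The whole argument rests on the index bound $|R : Z(R)| \in \{24, 25\}$ quoted from \cite{dbn2015}, and that is where I expect the real difficulty to lie; everything after it is the same mechanical enumeration-and-elimination as in Theorem~\ref{classification6c}. Should that bound instead admit further values (for instance $16$), the main obstacle would shift to eliminating by hand those candidate groups not already covered by Theorems~\ref{3chdc}, \ref{z2*z10}, \ref{z4*z4} and \ref{z2*z4}, most notably ${\mathbb{Z}}_2 \times {\mathbb{Z}}_2 \times {\mathbb{Z}}_2 \times {\mathbb{Z}}_2$. For such a group one cannot read off the number of centralizers from its isomorphism type alone: $|\Cent(R)|$ depends on the alternating biadditive commutator map $(\bar x, \bar y) \mapsto xy - yx$ on $\frac{R}{Z}$, whose radical is trivial by the definition of the centre, and not merely on $\frac{R}{Z}$. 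One would then have to show that no such map on the leftover group yields exactly $7$ distinct subgroups of the form $C_R(x)$. Controlling that form-dependence, in the explicit coset-by-coset bookkeeping style of the earlier proofs, is the delicate step.
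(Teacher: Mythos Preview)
Your overall enumerate-and-eliminate strategy is exactly the paper's, but the proof has a concrete gap: the index bound you quote from \cite{dbn2015} is wrong. Theorem~4.3 there actually gives $|R:Z(R)| \in \{12, 18, 20, 24, 25\}$, not $\{24, 25\}$. Consequently three additional non-cyclic candidates must be ruled out: ${\mathbb Z}_2 \times {\mathbb Z}_6$ (order $12$), ${\mathbb Z}_3 \times {\mathbb Z}_6$ (order $18$), and ${\mathbb Z}_2 \times {\mathbb Z}_{10}$ (order $20$). The paper disposes of these via results you already cite in your hedge: Theorem~\ref{z2*z10} shows a ring with central factor ${\mathbb Z}_2 \times {\mathbb Z}_{10}$ is $6$-centralizer; Theorem~\ref{z4*z4} shows ${\mathbb Z}_3 \times {\mathbb Z}_6$ forces $8$-centralizer; and the explicit computation inside the proof of Theorem~\ref{classificationPr6c} shows ${\mathbb Z}_2 \times {\mathbb Z}_6$ forces $6$-centralizer. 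Once these are struck, only ${\mathbb Z}_2 \times {\mathbb Z}_2 \times {\mathbb Z}_6$ and ${\mathbb Z}_5 \times {\mathbb Z}_5$ survive, as you say.

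Your contingency analysis, while thoughtful, is aimed at the wrong target: $16$ is not among the possible indices, so the delicate form-dependent elimination of ${\mathbb Z}_2^4$ that you worry about never arises. The extra indices that do appear are all handled by the ``rigid'' theorems already in the paper, with no ambiguity of the kind you anticipate. So the fix is simply to start from the correct list $\{12,18,20,24,25\}$ and add three one-line citations; no new ideas are needed.
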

\begin{proof}
If $R$ is a $7$-centralizer finite ring then by Theorem $4.3$ in \cite{dbn2015}, we have $|R : Z(R)| = 12, 18, 20$, $24$ or $25$.
Therefore by fundamental theorem of finite abelian groups,  $\frac{R}{Z(R)}$ is isomorphic to
${\mathbb{Z}}_{12},  {\mathbb{Z}}_2 \times {\mathbb{Z}}_6, {\mathbb{Z}}_{18}$, ${\mathbb{Z}}_3 \times {\mathbb{Z}}_6$, ${\mathbb{Z}}_{20},  {\mathbb{Z}}_2 \times {\mathbb{Z}}_{10}$, ${\mathbb{Z}}_{24}, {\mathbb{Z}}_2 \times {\mathbb{Z}}_{12}, {\mathbb{Z}}_2 \times {\mathbb{Z}}_2 \times {\mathbb{Z}}_6$, ${\mathbb{Z}}_{25} \text{ or } {\mathbb{Z}}_5 \times {\mathbb{Z}}_5$.

 Since $R$ is non-commutative,   $\frac{R}{Z(R)}$ is not isomorphic to ${\mathbb{Z}}_{12}, {\mathbb{Z}}_{18}, {\mathbb{Z}}_{20}, {\mathbb{Z}}_{24}$ and ${\mathbb{Z}}_{25}$.
It is shown in the proof of Theorem \ref{classificationPr6c} that that $R$ is $6$-centralizer if $\frac{R}{Z(R)} \cong {\mathbb{Z}}_2 \times {\mathbb{Z}}_6$. Therefore, $\frac{R}{Z(R)}$ is not isomorphic to ${\mathbb{Z}}_2 \times {\mathbb{Z}}_6$.
Also, by Theorem \ref{z2*z10} and Theorem \ref{z4*z4}, 
we have $\frac{R}{Z(R)}$ is not isomorphic to ${\mathbb{Z}}_2 \times {\mathbb{Z}}_{10}, {\mathbb{Z}}_3 \times {\mathbb{Z}}_6$ and ${\mathbb{Z}}_2 \times {\mathbb{Z}}_{12}$.
Hence, the theorem follows.
\end{proof}

We conclude this paper by the following result.
\begin{theorem}\label{pr7c}
Let $R$ be a finite $7$-centralizer ring such that $R/Z(R)$ is not isomorphic to ${\mathbb{Z}}_2 \times {\mathbb{Z}}_2 \times {\mathbb{Z}}_6$. Then $\Pr(R) = \frac{29}{125}$.
\end{theorem}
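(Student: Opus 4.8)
The plan is to combine Proposition~\ref{7cent_bc} with Theorem~\ref{3chdc}; indeed, essentially all of the substantive work has already been carried out in those two results, so the present statement should follow by elimination and a single substitution.

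First I would invoke Proposition~\ref{7cent_bc}, which asserts that a finite $7$-centralizer ring $R$ satisfies $\frac{R}{Z(R)} \cong {\mathbb{Z}}_2 \times {\mathbb{Z}}_2 \times {\mathbb{Z}}_6$ or ${\mathbb{Z}}_5 \times {\mathbb{Z}}_5$. Since the hypothesis of the theorem excludes the case $\frac{R}{Z(R)} \cong {\mathbb{Z}}_2 \times {\mathbb{Z}}_2 \times {\mathbb{Z}}_6$, the only surviving possibility is
\[
\frac{R}{Z(R)} \cong {\mathbb{Z}}_5 \times {\mathbb{Z}}_5.
\]

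Next, with the central factor identified as ${\mathbb{Z}}_p \times {\mathbb{Z}}_p$ for the prime $p = 5$, I would apply Theorem~\ref{3chdc}, which gives $\Pr(R) = \frac{p^2 + p - 1}{p^3}$ (and, as a built-in consistency check, confirms that such an $R$ is $(p+2) = 7$-centralizer, matching the hypothesis). Substituting $p = 5$ yields
\[
\Pr(R) = \frac{5^2 + 5 - 1}{5^3} = \frac{29}{125},
\]
as claimed.

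I do not anticipate any genuine obstacle here: the difficulty has been front-loaded into Proposition~\ref{7cent_bc}, which rules out all the intermediate abelian groups of orders $12, 18, 20, 24, 25$ via the earlier non-existence and $6$-centralizer results, and into Theorem~\ref{3chdc}, which handles the centralizer bookkeeping and the commuting-probability computation for any central factor of the form ${\mathbb{Z}}_p \times {\mathbb{Z}}_p$. Once those are granted, the present theorem is immediate, so the proof reduces to citing the two results and evaluating the formula at $p = 5$.
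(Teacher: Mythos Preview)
Your proposal is correct and mirrors the paper's own proof exactly: the paper simply cites Proposition~\ref{7cent_bc} and Theorem~\ref{3chdc}, and the value $\frac{29}{125}$ drops out by substituting $p=5$ into $\frac{p^2+p-1}{p^3}$. There is nothing to add or amend.
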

\begin{proof}
The result follows from Theorem \ref{7cent_bc} and Theorem \ref{3chdc}.
\end{proof}



\end{document}